\begin{document}
\setcounter{page}{1}
\newcommand{\keyw}[1]{\par\noindent{\bf Keywords: }#1.}
\numberwithin{equation}{section}
\numberwithin{figure}{section}
\newtheorem{theorem}{Theorem}[section]
\newtheorem{lemma}[theorem]{Lemma}
\newtheorem{corollary}[theorem]{Corollary}
\newtheorem{proposition}[theorem]{Proposition}
\theoremstyle{definition}
\newtheorem{definition}[theorem]{Definition}
\newtheorem{example}[theorem]{Example}
\theoremstyle{remark}
\newtheorem{remark}[theorem]{Remark}
\newtheorem{conjecture}[theorem]{Conjecture}
\newtheorem{notation}[theorem]{Notation}
\newtheorem{openproblem}[theorem]{Open Problem}

\title[\hfilneg \hfil Fractional Difference Equation]
{ Existence of Some Positive Solutions to Fractional Difference Equation}

\author[Deepak B. Pachpatte, Arif S. Bagwan, Amol D. Khandagal]
{Deepak B. Pachpatte, Arif S. Bagwan, Amol D. Khandagale}

\address{Deepak B. Pachpatte \newline
 Department of Mathematics,
 Dr. Babasaheb Ambedkar Marathwada University, Aurangabad,
 Maharashtra 431004, India}
\email{pachpatte@gmail.com}

\address{Arif S. Bagwan \newline
  Department of First Year Engineering,
  Pimpri Chinchwad College of Engineering, Nigdi, Pune,
 Maharashtra  4110444, India}
\email{arif.bagwan@gmail.com}

\address{Amol D. Khandagale \newline
 Department of Mathematics,
 Dr. Babasaheb Ambedkar Marathwada University, Aurangabad,
 Maharashtra 431004, India}
\email{ kamoldsk@gmail.com}

\subjclass[2010]{39A10, 26A33}
\keywords{  Fractional difference equation, existence, positive solutio}

\begin{abstract}
 The main objective of this paper is to study the existence of solutions to some basic fractional difference equations. The tools employed are Krasnosel'skii fixed point theorem which guarantee at least two positive solutions.
\end{abstract}

\maketitle

\section{Introduction}

The theory of fractional calculus and associated fractional differential equations in continuous case has received great attention. However, very limited progress has been done in the development of the theory of finite fractional difference equations. But, recently a remarkable research work has been made in the theory of fractional difference equations. Diaz and Osler \cite{10} introduced a discrete fractional difference operator defined as an infinite series. \\

Recently, a variety of results on discrete fractional calculus have been published by Atici and Eloe \cite{4,5,7} with delta operator. Atici and Sengul \cite{6} provided some initial attempts by using the discrete fractional difference equations to model tumor growth. M. Holm \cite{11} extended his contribution to discrete fractional calculus by presenting a brief theory for composition of fractional sum and difference. Furthermore, Goodrich \cite{1,2,3} developed some results on discrete fractional calculus in which he used Krasnosel'skii fixed point theorem to prove the existence of initial and boundary value problems. Following this trend, H. Chen, et. al. \cite{13} and S. Kang, et. al. \cite{14} discussed about the positive solutions of BVPs of fractional difference equations depending on parameters. H. Chen, et. al. \cite{8}, in their article provided multiple solutions to fractional difference boundary value problems using various fixed point theorems.\\

In this paper, we consider the boundary value problems of fractional difference equation of the form,
\[
 -\Delta^{v}y(t)=\lambda h(t+v-1)f(t+v-1,y(t+v-1)),
\tag{1.1}\]
\[
  y(v-2)=y(v+b)=0.
\tag{1.2}\]
Where, $ t\in [0,b]_{\mathbb{N}_{0}} $, $ f:[v-1,v+b]_{\mathbb{N}_{v-1}}\times \mathbb{R} \rightarrow \mathbb{R}  $ is continuous, $ h:[v-1,v+b]_{\mathbb{N}_{v-1}}\longrightarrow [0,\infty) $, $ 1<v\leq 2 $ and $\lambda$  is a positive parameter. \\

The present paper is organized as follows. In section 2, together with some basic definitions, we will demonstrate some important lemmas and theorem in order to prove our main result. In section 3, we establish the results for existence of solutions to the boundary value problem $ (1.1)-(1.2) $ using Krasnosel'skii fixed point theorem.

\section{Preliminaries}

In this section, let us first collect some basic definitions and lemmas that are very much important to us in the sequel.\\

\begin{definition}\cite{3,7}\label{} We define,
\[t^{\underline{v}}=\frac{\Gamma(t+1)}{\Gamma(t+1-v)},
\tag{2.1} \]
 for any $t$ and $v$ for which right hand side is defined. We also appeal to the convention that if $t+v-1$ is a pole of the Gamma function and $t+1$ is not a pole, then $t^{\underline{v}}=0$.
\end{definition}
\begin{definition}\cite{3,7}\label{}
The $ v^{th} $ fractional sum of a function $f$, for $v>0$ is defined as,
\[
\Delta^{-v}f(t)=\Delta^{-v}f(t,a):=\frac{1}{\Gamma(v)}\sum_{s=a}^{t-v}(t-s-1)^{(v-1)}f(s),
\tag{2.2} \]
for $ t\in\left\lbrace a+v,a+v+1,\dots\right\rbrace =:\mathbb{N}_{a+v} $. We also define the $ v^{th} $ fractional difference for $ t^{\underline{v}}=0 $ by $\Delta^{v}_{a+v}f(t):=\Delta^{N}\Delta^{v-N}f(t)$, where $ t\in \mathbb{N}_{a+v} $ and $ N\in \mathbb{N} $ is chosen so that $ 0\leq N-1< v\leq N $.
\end{definition}
Now we give some important lemmas.\\

\begin{lemma}\cite{3,7}\label{2.1}
Let $t$ and $v$ be any numbers for which $t^{\underline{v}}$ and $ t^{\underline{v-1}} $ are defined. Then $\Delta t^{\underline{v}}=vt^{\underline{v-1}}$. \\
\end{lemma}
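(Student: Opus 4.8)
The plan is to establish this discrete power rule by a direct computation from the definition (2.1), using only the forward-difference operator $\Delta g(t)=g(t+1)-g(t)$ and the functional equation $\Gamma(x+1)=x\,\Gamma(x)$ of the Gamma function. The identity is the exact discrete analogue of $\frac{d}{dt}t^{v}=v\,t^{v-1}$, so the proof is essentially a single algebraic simplification rather than an argument requiring a clever idea.

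First I would expand $\Delta t^{\underline{v}}=(t+1)^{\underline{v}}-t^{\underline{v}}$ and rewrite each falling factorial via (2.1), giving $\frac{\Gamma(t+2)}{\Gamma(t+2-v)}-\frac{\Gamma(t+1)}{\Gamma(t+1-v)}$. Next I would apply the recurrence to the first fraction, writing $\Gamma(t+2)=(t+1)\Gamma(t+1)$ and $\Gamma(t+2-v)=(t+1-v)\Gamma(t+1-v)$, so that the common factor $\frac{\Gamma(t+1)}{\Gamma(t+1-v)}$ can be pulled out. The remaining bracket simplifies as $\frac{t+1}{t+1-v}-1=\frac{v}{t+1-v}$, and absorbing the factor $t+1-v$ back into the denominator via the recurrence once more collapses the whole expression to $\frac{v\,\Gamma(t+1)}{\Gamma(t+2-v)}$. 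Finally I would identify this with $v\,t^{\underline{v-1}}$, since $t^{\underline{v-1}}=\frac{\Gamma(t+1)}{\Gamma(t+1-(v-1))}=\frac{\Gamma(t+1)}{\Gamma(t+2-v)}$ by (2.1), which is exactly the claimed formula.

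The computation is routine, so the only real point of care is the boundary behaviour governed by the pole convention in Definition 2.1. The manipulations above silently require $t+1-v$ to be nonzero and the relevant Gamma arguments to avoid poles, and the hypothesis that both $t^{\underline{v}}$ and $t^{\underline{v-1}}$ are defined is precisely what licenses these cancellations. I would therefore finish by checking the degenerate cases separately, verifying that whenever the convention forces a falling factorial to vanish the asserted identity still holds trivially, with both sides reducing to $0$, so that no restriction beyond the stated hypothesis is needed. I expect this bookkeeping with the pole convention, rather than the algebra itself, to be the subtlest part.
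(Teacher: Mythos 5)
Your computation is correct, and there is nothing in the paper to compare it against: the paper states this lemma with citations to Goodrich and Atici--Eloe and gives no proof of its own. Your argument --- expanding $\Delta t^{\underline{v}}=(t+1)^{\underline{v}}-t^{\underline{v}}$, using $\Gamma(x+1)=x\,\Gamma(x)$ to factor out $\frac{\Gamma(t+1)}{\Gamma(t+1-v)}$, simplifying $\frac{t+1}{t+1-v}-1=\frac{v}{t+1-v}$, and reabsorbing $t+1-v$ to get $\frac{v\,\Gamma(t+1)}{\Gamma(t+2-v)}=v\,t^{\underline{v-1}}$ --- is precisely the standard proof found in those cited references, and your closing remark about the pole convention correctly identifies the only delicate point.
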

\begin{lemma}\cite{3,7}\label{2.2}
Let  $ 0\leq N-1< v\leq N $. Then
\[
\Delta^{-v}\Delta^{v}y(t)=y(t)+C_{1}t^{\underline{v-1}}+C_{2}t^{\underline{v-2}}+\dots +C_{N}t^{\underline{v-N}},
\tag{2.3} \]
for some $ C_{i}\in R $ with $ 1\leq i\leq N $.\\
\end{lemma}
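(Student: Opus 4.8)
The plan is to reduce the composition $\Delta^{-v}\Delta^{v}y$ to a fractional sum applied to an integer-order difference, then commute that integer difference through the sum and collect the boundary terms that arise. Since $0\le N-1<v\le N$, the definition of the fractional difference gives $\Delta^{v}y(t)=\Delta^{N}\Delta^{v-N}y(t)=\Delta^{N}\Delta^{-(N-v)}y(t)$, where $N-v\ge 0$, so that $\Delta^{-(N-v)}$ is a genuine fractional sum (reducing to the identity operator when $v=N$). Writing $g:=\Delta^{-(N-v)}y$, I would then analyze $\Delta^{-v}\Delta^{N}g(t)$.

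First I would establish (or invoke) the composition rule for a fractional sum applied to an integer difference, namely that for a positive integer $m$ and $\mu>0$,
\[
\Delta^{-\mu}\Delta^{m}g(t)=\Delta^{m-\mu}g(t)-\sum_{k=0}^{m-1}\frac{(t-a)^{\underline{\mu-m+k}}}{\Gamma(\mu-m+k+1)}\,\Delta^{k}g(a).
\]
This identity is the heart of the argument. I would derive it by induction on $m$: the base case $m=1$ follows from the summation definition (2.2) together with a summation-by-parts manipulation, using the power rule $\Delta t^{\underline{v}}=vt^{\underline{v-1}}$ of Lemma \ref{2.1} to evaluate the telescoping boundary contribution; the inductive step peels off one forward difference and re-applies the base case.

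Applying this with $\mu=v$ and $m=N$ yields $\Delta^{-v}\Delta^{v}y(t)=\Delta^{N-v}g(t)-\sum_{k=0}^{N-1}\frac{(t-a)^{\underline{v-N+k}}}{\Gamma(v-N+k+1)}\Delta^{k}g(a)$. The leading term collapses: by the composition law for fractional operators, $\Delta^{N-v}g(t)=\Delta^{N-v}\Delta^{-(N-v)}y(t)=y(t)$. Reindexing the remaining finite sum by $i=N-k$ turns the falling factorials into $(t-a)^{\underline{v-i}}$ for $1\le i\le N$, with coefficients $C_{i}=-\Delta^{N-i}g(a)/\Gamma(v-i+1)$ that are constants independent of $t$.

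The final point, and the step I expect to be the main obstacle, is matching the correction terms to the stated basis $\{t^{\underline{v-1}},\dots,t^{\underline{v-N}}\}$ rather than the shifted factorials $(t-a)^{\underline{v-i}}$. I would resolve this by observing that each $t^{\underline{v-i}}$ lies in the kernel of $\Delta^{v}$ (a consequence of the power rule and the stated convention on poles of the Gamma function), so that this kernel is $N$-dimensional and spanned by exactly these falling factorials. Since the correction term is annihilated by $\Delta^{v}$, it must equal a linear combination $\sum_{i=1}^{N}C_{i}t^{\underline{v-i}}$, the base-point shifts being absorbed into the arbitrary constants $C_{i}$. This identifies the right-hand side of (2.3) and completes the proof.
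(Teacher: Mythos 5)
The first thing to note is that the paper contains no proof of this lemma: it is stated with citations to Goodrich \cite{3} and Atici--Eloe \cite{7} and used as a black box in the proof of the later fixed-point characterization, so there is no in-paper argument to compare yours against. Judged on its own, your outline is correct, and it is essentially the standard proof from that cited literature: the decomposition $\Delta^{v}y=\Delta^{N}\Delta^{-(N-v)}y$ is exactly the paper's definition of the fractional difference; your key identity $\Delta^{-\mu}\Delta^{m}g(t)=\Delta^{m-\mu}g(t)-\sum_{k=0}^{m-1}\frac{(t-a)^{\underline{\mu-m+k}}}{\Gamma(\mu-m+k+1)}\,\Delta^{k}g(a)$ is precisely Holm's composition theorem for a fractional sum applied to an integer difference \cite{11}, proved there by the same induction-plus-summation-by-parts that you sketch; and the collapse $\Delta^{N-v}\Delta^{-(N-v)}y=y$ is the standard left-inverse property, which follows from the semigroup law for fractional sums together with $\Delta\Delta^{-1}y=y$.

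The only imprecision is in the step you yourself flag as the main obstacle. The phrase ``base-point shifts being absorbed into the arbitrary constants'' cannot be taken literally: $(t-a)^{\underline{v-i}}$ is not a constant multiple of $t^{\underline{v-i}}$, and in general not even a linear combination of the unshifted family (for $v=3/2$, one checks that $(t-1)^{\underline{1/2}}$ is not a linear combination of $t^{\underline{1/2}}$ and $t^{\underline{-1/2}}$). The kernel argument you give immediately afterwards is the correct repair, but be aware that it is valid only under the base-point convention actually in force in this paper: $\Delta^{v}$ acts on functions defined on $\mathbb{N}_{v-N}$ (here $y(v-2)$ is prescribed and $N=2$), and it is for this base point that the power rule plus the stated pole convention show the unshifted $t^{\underline{v-i}}$, $1\le i\le N$, to lie in $\ker\Delta^{v}$ and, by the dimension count, to span it; for a different base point the correction terms are genuinely shifted factorials and the identity (2.3) would need restating. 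Better still, under that same convention the obstacle disappears entirely: $g=\Delta^{-(N-v)}y$ is then defined on $\mathbb{N}_{0}$, so the base point appearing in your correction sum is $a=0$ and Holm's formula already produces the unshifted factorials, with $C_{i}=-\Delta^{N-i}g(0)/\Gamma(v-i+1)$, no spanning argument needed. With that bookkeeping made explicit, your proposal is a complete and correct proof of (2.3).
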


\begin{lemma}\cite{7}\label{2.3}
Let  $ 1< v\leq 2 $ and $ f:[v-1,v+b]_{\mathbb{N}_{v-1}}\times \mathbb{R} \rightarrow \mathbb{R} $ be given. Then the solution of fractional boundary value problem $-\Delta^{v}y(t)=f(t+v-1,y(t+v-1))$, \,\,\,\,\,\,
$ y(v-2)=y(v+b+1)=0 $
 is given by
\[
 y(t)=\sum_{s=0}^{b+1}{G(t,s)f(s+v-1,y(s+v-1))},
\tag{2.4} \]
 where Green’s function $ G:[v-1,v+b]_{\mathbb{N}_{v-1}}\times[0,b+1]_{\mathbb{N}_{0}}\rightarrow \mathbb{R} $ is defined by
\[
G\left( {t,s} \right) = \frac{1}{{\Gamma v}}\left\{ \begin{array}{l}
\frac{{{t^{\underline {v - 1} }}{{\left( {v + b - s} \right)}^{\underline {v - 1} }}}}{{{{\left( {v + b - 1} \right)}^{\underline {v - 1} }}}} - {\left( {t - s - 1} \right)^{\underline {v - 1} }},\,\,\,\,\,\,0 \le s < t - v + 1 \le b + 1\\
\frac{{{t^{\underline {v - 1} }}{{\left( {v + b - s} \right)}^{\underline {v - 1} }}}}{{{{\left( {v + b - 1} \right)}^{\underline {v - 1} }}}},\,\,\,\,\,\,\,\,\,\,\,\,\,\,\,\,\,\,\,\,\,\,\,\,\,\,\,\,\,\,\,\,\,\,\,\,\,\,\,\,\,\,\,\,\,\,\,\,0 \le t - v + 1 < s \le b + 1,
\end{array} \right.
\tag{2.5} \]
\end{lemma}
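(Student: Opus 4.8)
The plan is to invert the fractional difference operator and then fix the two free constants using the boundary data. Since $1<v\leq 2$, in the notation of Lemma \ref{2.2} we have $N=2$. Applying the $v$th fractional sum $\Delta^{-v}$ to $-\Delta^{v}y(t)=f(t+v-1,y(t+v-1))$ and using the inversion formula (2.3), I obtain
\[
y(t)=-\Delta^{-v}f(t+v-1,y(t+v-1))+C_{1}t^{\underline{v-1}}+C_{2}t^{\underline{v-2}}
\]
for some $C_{1},C_{2}\in\mathbb{R}$, where the sign of the constants is immaterial. Writing the fractional sum out with (2.2) turns this into
\[
y(t)=-\frac{1}{\Gamma(v)}\sum_{s=0}^{t-v}(t-s-1)^{\underline{v-1}}f(s+v-1,y(s+v-1))+C_{1}t^{\underline{v-1}}+C_{2}t^{\underline{v-2}}.
\]

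Next I would impose the two boundary conditions. Evaluating at $t=v-2$, the summation is empty and so vanishes; moreover, by the pole convention of Definition \ref{} (equation (2.1)) one has $(v-2)^{\underline{v-1}}=\Gamma(v-1)/\Gamma(0)=0$, while $(v-2)^{\underline{v-2}}=\Gamma(v-1)\neq 0$ for $1<v\leq 2$. Hence $y(v-2)=0$ forces $C_{2}=0$, and the two conditions decouple cleanly; this is the step I expect to rest most delicately on the falling-factorial conventions. Then evaluating at the right endpoint (where the particular sum runs up to $s=b+1$) and solving the resulting scalar linear equation for $C_{1}$ expresses $C_{1}$ as a constant multiple of $\sum_{s=0}^{b+1}(v+b-s)^{\underline{v-1}}f(s+v-1,y(s+v-1))$, the normalizing factor being $t^{\underline{v-1}}$ evaluated at that endpoint, which is exactly the denominator appearing in (2.5).

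Finally I would substitute $C_{1}$ back into the expression for $y(t)$ and merge the two sums into a single sum $\sum_{s=0}^{b+1}G(t,s)f(s+v-1,y(s+v-1))$. The key observation is that the particular sum contributes the term $-(t-s-1)^{\underline{v-1}}$ only for indices $s\leq t-v$, that is $s<t-v+1$, whereas the boundary contribution proportional to $t^{\underline{v-1}}(v+b-s)^{\underline{v-1}}$ is present for every $s$ in the range. Matching these two index ranges produces precisely the two branches of $G(t,s)$ displayed in (2.5). The main obstacle here is purely bookkeeping: carefully tracking the argument shift $t\mapsto t+v-1$, the summation range $0\leq s\leq b+1$, and the pole conventions, so that the endpoint terms evaluate correctly and the case split in $s$ lines up with the upper limit $t-v$ of the particular sum.
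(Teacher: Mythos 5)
Your proposal is correct and is essentially the paper's own argument: although the lemma is quoted from \cite{7}, the paper carries out exactly this derivation (inversion via formula (2.3) with $N=2$; $C_{2}=0$ from the empty sum together with $(v-2)^{\underline{v-1}}=0$ by the pole convention and $(v-2)^{\underline{v-2}}=\Gamma(v-1)$; a scalar equation for $C_{1}$ at the right endpoint; then merging the particular sum, supported on $s\le t-v$, with the boundary term present for all $s$) in its proof of the fixed point characterization, Theorem 2.7. One caveat: with the stated boundary condition $y(v+b+1)=0$ your normalizing factor is $(v+b+1)^{\underline{v-1}}$, which is the mathematically correct denominator, so your remark that it ``is exactly the denominator appearing in (2.5)'' holds only in spirit --- the $(v+b-1)^{\underline{v-1}}$ printed in (2.5) is a typo in the paper, not a flaw in your argument.
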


\begin{lemma}\cite{7}\label{}
The Green’s function $ G(t,s) $ given in above lemma satisfies,
\begin{enumerate}
\item  $ G\left( {t,s} \right) \ge 0 $ for each $\left( {t,s} \right) \in {\left[ {v - 2,v + b} \right]_{\mathbb{N}_{{_{v - 2}}}}} \times {\left[ {0,\,b + 1} \right]_{\mathbb{N}{_0}}}$
\item $ \mathop {\max }\limits_{t \in {{\left[ {v - 2,v + b} \right]}_{\mathbb{N}_{{_{v - 2}}}}}} G\left( {t,s} \right) = G\left( {s + v - 1,s} \right) $ for each $ s \in {\left[ {0,b} \right]_{\mathbb{N}{_0}}} $ and
\item There exists a number $ \gamma  \in \left( {0,1} \right) $ such that \\ $ \mathop {\min }\limits_{t \in {{\left[ {\frac{{v + b}}{4},\frac{{3(v + b)}}{4}} \right]}_{\mathbb{N}_{{_{v - 2}}}}}} G\left( {t,s} \right) \ge \mathop {\max }\limits_{t \in {{\left[ {v - 2,v + b} \right]}_{\mathbb{N}_{{_{v - 2}}}}}} G\left( {t,s} \right) = \gamma G\left( {s + v - 1,s} \right) $, for $ s \in {\left[ {0,b} \right]_{\mathbb{N}{_0}}} $

\end{enumerate}
\end{lemma}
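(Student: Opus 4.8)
The plan is to prove the three properties in order, with parts (1) and (2) flowing from a single monotonicity analysis of $G(\cdot,s)$ in its first argument, and part (3) following by a finiteness argument applied to the resulting positive ratios. Throughout I would fix $s \in [0,b]_{\mathbb{N}_0}$ and regard $G(t,s)$ as a function of the single variable $t$, treating the two branches of (2.5) separately.

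First I would dispatch the lower branch, where $0 \le t-v+1 < s$ and $G(t,s) = \frac{1}{\Gamma(v)}\cdot\frac{t^{\underline{v-1}}(v+b-s)^{\underline{v-1}}}{(v+b-1)^{\underline{v-1}}}$. Over the ranges in question each factor $t^{\underline{v-1}}$ and $(v+b-s)^{\underline{v-1}}$ is nonnegative while the denominator $(v+b-1)^{\underline{v-1}}$ is strictly positive, so this branch is immediately nonnegative; moreover, by Lemma~\ref{2.1}, $\Delta_t\, t^{\underline{v-1}} = (v-1)t^{\underline{v-2}} \ge 0$, so $G(\cdot,s)$ is nondecreasing on the lower branch.

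Next I would treat the upper branch $0 \le s < t-v+1$, which carries the extra term $-(t-s-1)^{\underline{v-1}}$. Applying Lemma~\ref{2.1} term by term yields $\Delta_t G(t,s) = \frac{v-1}{\Gamma(v)}\left[\frac{t^{\underline{v-2}}(v+b-s)^{\underline{v-1}}}{(v+b-1)^{\underline{v-1}}} - (t-s-1)^{\underline{v-2}}\right]$. The crux of the whole argument is to show the bracketed quantity is $\le 0$, i.e. the factorial-power inequality $\frac{t^{\underline{v-2}}(v+b-s)^{\underline{v-1}}}{(v+b-1)^{\underline{v-1}}} \le (t-s-1)^{\underline{v-2}}$; I expect this to be the main obstacle, and I would attack it by rewriting each falling factorial as a ratio of Gamma functions via (2.1) and reducing to a monotonicity comparison in $t$ and $s$. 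Granting this, $G(\cdot,s)$ is nonincreasing on the upper branch, and evaluating at the right endpoint (where the boundary condition forces $G$ to vanish) gives $G(t,s) \ge 0$ throughout, completing (1). Combining the two monotonicity facts—nondecreasing below $t = s+v-1$ and nonincreasing above it—shows the maximum over $t$ is attained exactly at the junction $t = s+v-1$, which is (2).

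Finally, for (3) I would set $\gamma := \min_{s \in [0,b]_{\mathbb{N}_0}} \, \min_{t} \frac{G(t,s)}{G(s+v-1,s)}$, where the inner minimum runs over $t \in \left[\frac{v+b}{4}, \frac{3(v+b)}{4}\right]_{\mathbb{N}_{v-2}}$. By (1) and (2) the denominator $G(s+v-1,s)$ is the strictly positive maximal value of $G(\cdot,s)$, so each ratio lies in $(0,1]$; since both indices range over finite sets the minimum is attained and is a genuine number $\gamma \in (0,1)$, strictly below $1$ because the compressed interval excludes the peak $t = s+v-1$. The stated chain of inequalities then follows immediately from the definition of $\gamma$ together with (2). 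The only delicate point here is confirming that $G$ remains strictly positive on the compressed interval so that $\gamma > 0$, which again rests on the monotonicity established above.
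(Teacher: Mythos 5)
A preliminary remark on the comparison itself: the paper contains no proof of this lemma at all --- it is quoted from Atici and Eloe \cite{7} --- so your proposal can only be judged on its own merits. On those merits, your skeleton is the right one (and essentially the one in \cite{7}): branch-wise monotonicity of $G(\cdot,s)$, maximum at the junction $t=s+v-1$, nonnegativity from the endpoint values, then $\gamma$ as a minimum of finitely many positive ratios. But the two steps you yourself flag as ``the main obstacle'' and ``the only delicate point'' are exactly where all the mathematical content lives, and neither is carried out. The inequality $\frac{t^{\underline{v-2}}(v+b-s)^{\underline{v-1}}}{(v+b-1)^{\underline{v-1}}}\le (t-s-1)^{\underline{v-2}}$ is not a routine reduction: it \emph{is} parts (1) and (2), and ``rewrite as Gamma ratios and compare monotonicity'' is a plan, not a proof; in \cite{7} this comparison of Gamma-function quotients is the bulk of the argument. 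As written, your proposal assumes the lemma's core rather than proving it. There is also an inherited inconsistency you should surface: your claim that $G$ vanishes at the right endpoint ``by the boundary condition'' is true for the Atici--Eloe normalization, whose denominator is $(v+b+1)^{\underline{v-1}}$, but is false for the formula as printed in (2.5) with denominator $(v+b-1)^{\underline{v-1}}$; you are silently correcting the paper's typo, and the key inequality you need must be stated with the corrected denominator or it will not yield the endpoint cancellation you rely on.

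Part (3) has two further genuine defects. First, to get $\gamma>0$ you need \emph{strict} positivity of $G(t,s)$ for every $t$ in $\left[\frac{v+b}{4},\frac{3(v+b)}{4}\right]_{\mathbb{N}_{v-2}}$ and every $s\in[0,b]_{\mathbb{N}_0}$, and your monotonicity facts give only nonnegativity: a nonincreasing nonnegative function that vanishes at the right endpoint may vanish well before it, so ``rests on the monotonicity established above'' does not close this. You need either strict decrease on the upper branch or direct positivity of the closed-form expression at interior points (on the lower branch this is easy, since $t^{\underline{v-1}}>0$ for $t\ge v-1$ and $(v+b-s)^{\underline{v-1}}>0$ for $s\le b$; the upper branch is again the hard case). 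Second, your justification that $\gamma<1$ --- ``the compressed interval excludes the peak $t=s+v-1$'' --- is simply false: for mid-range values of $s$ the peak lies inside $\left[\frac{v+b}{4},\frac{3(v+b)}{4}\right]$. This particular flaw is harmless (one may always replace $\gamma$ by $\min\{\gamma,1/2\}$, say, since the inequality only needs $\gamma$ small), but the reasoning as stated is wrong. Note finally that the inequality chain in part (3) as printed in the paper (min over the subinterval $\ge$ max over the whole interval) cannot be literally correct; you proved the intended statement $\min_t G(t,s)\ge\gamma\,G(s+v-1,s)$, which is the right reading, but that correction deserves to be made explicit.
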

Now we give the solution of fractional boundary value problem $ (1.1)-(1.2) $, if it exists.\\

\begin{theorem}\label{}
Let $ f:{\left[ {v - 1,v + b} \right]_{\mathbb{N}_{{_{v - 1}}}}} \times  \mathbb{R} \to \mathbb{R} $ be given. A function $ y(t) $ is a solution to discrete fractional boundary value problem $ (1.1)-(1.2) $  iff is a fixed point of the operator
\[
  Fy(t) = \lambda \sum\limits_{s = 0}^b {G\left( {t,s} \right)h\left( {s + v - 1} \right)f\left( {s + v - 1,y\left( {s + v - 1} \right)} \right)} ,
\tag{2.6} \]
where $ G(t,s) $ is given in above lemma $ (2.3) $ \\
\end{theorem}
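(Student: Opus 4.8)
The plan is to prove the stated equivalence by reducing the fractional boundary value problem to an equivalent summation equation and then recognizing the Green's function. The quickest route exploits Lemma~\ref{2.3}: since the only difference between (1.1) and the equation treated there is the extra factor $\lambda h(t+v-1)$ multiplying $f$, I would set $\tilde f(t+v-1,y):=\lambda h(t+v-1)f(t+v-1,y(t+v-1))$ and apply Lemma~\ref{2.3} to $-\Delta^{v}y(t)=\tilde f(\cdots)$. This immediately gives $y(t)=\sum_{s}G(t,s)\tilde f(\cdots)=\lambda\sum_{s}G(t,s)h(s+v-1)f(s+v-1,y(s+v-1))=Fy(t)$, so that $y$ solves the boundary value problem precisely when $y=Fy$. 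The only care needed here is the reindexing between the boundary condition $y(v+b)=0$ in (1.2) and the form $y(v+b+1)=0$ used in Lemma~\ref{2.3} (correspondingly, the summation range $0\le s\le b$ versus $0\le s\le b+1$).

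To make the argument self-contained, I would instead derive the representation from scratch. First rewrite (1.1) as $\Delta^{v}y(t)=-\lambda h(t+v-1)f(t+v-1,y(t+v-1))$ and apply the fractional sum operator $\Delta^{-v}$. By Lemma~\ref{2.2} with $N=2$ (since $1<v\le 2$), together with the definition (2.2), I obtain
\[
y(t)=-\frac{\lambda}{\Gamma(v)}\sum_{s=0}^{t-v}(t-s-1)^{\underline{v-1}}h(s+v-1)f(s+v-1,y(s+v-1))-C_{1}t^{\underline{v-1}}-C_{2}t^{\underline{v-2}},
\]
for some constants $C_{1},C_{2}\in\mathbb{R}$. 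This step is reversible, which is what ultimately yields the \emph{iff}.

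Next I would impose the two boundary conditions to pin down $C_{1}$ and $C_{2}$. Evaluating at $t=v-2$, the summation is empty and, by the pole convention of (2.1), $(v-2)^{\underline{v-1}}=0$ while $(v-2)^{\underline{v-2}}=\Gamma(v-1)\neq 0$; hence $y(v-2)=0$ forces $C_{2}=0$. Evaluating at the right endpoint then gives a single linear equation for $C_{1}$, whose solution introduces the denominator $(v+b-1)^{\underline{v-1}}$ appearing in (2.5). Substituting $C_{1}$ and $C_{2}$ back and regrouping the terms of the two sums, the coefficient of each $h(s+v-1)f(s+v-1,y(s+v-1))$ collapses to exactly the piecewise expression $G(t,s)$ of (2.5), the two branches corresponding to $0\le s<t-v+1$ and $0\le t-v+1<s$; this yields $y(t)=Fy(t)$. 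For the converse, given $y=Fy$, I would apply $-\Delta^{v}$ termwise, using the power rule $\Delta t^{\underline{v}}=vt^{\underline{v-1}}$ of Lemma~\ref{2.1} to recover (1.1), while the boundary values follow from the vanishing of $G(t,s)$ at $t=v-2$ and at the right endpoint. The main obstacle is the bookkeeping in the substitution step: the convolution sum has the $t$-dependent upper limit $t-v$, whereas the constant-correction term $C_{1}t^{\underline{v-1}}$ contributes over the full range $0\le s\le b$, so matching the two pieces against the piecewise Green's function demands careful case analysis and attention to the falling-factorial conventions at the poles of the Gamma function.
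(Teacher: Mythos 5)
Your self-contained derivation is exactly the paper's own proof: apply Lemma~2.2 (with $N=2$) to get $y(t)=-\Delta^{-v}\lambda h f+C_{1}t^{\underline{v-1}}+C_{2}t^{\underline{v-2}}$, use $y(v-2)=0$ with the pole convention to force $C_{2}=0$, solve $y(v+b)=0$ for $C_{1}$, and regroup the two sums into the piecewise Green's function to obtain $y=Fy$. Your proposal is in fact slightly more careful than the paper, since you explicitly address the converse direction of the \emph{iff} (via reversibility and the power rule of Lemma~2.1) and flag the indexing mismatch between (1.2) and Lemma~2.3, both of which the paper passes over silently.
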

\begin{proof} From lemma $(2.2)$ we find that a general solution to problem $ (1.1)-(1.2) $ \[y\left( t \right) =  - {\Delta ^{ - v}}\lambda h(t + v - 1)f(t + v - 1,y(t + v - 1)) + {C_1}{t^{\underline {v - 1} }} + {C_2}{t^{\underline {v - 2} }},\]
 from the boundary condition \(y(v-2)=0\),
\begin{align*}
y\left( {v - 2} \right) & = {\left. { - {\Delta ^{ - v}}\lambda h(t + v - 1)f(t + v - 1,y(t + v - 1))} \right|_{t = v - 2}} \\
& + {C_1}{(v - 2)^{\underline {v - 1} }} + {C_2}{(v - 2)^{\underline {v - 2} }}\\
& = {\left. { - \frac{1}{{\Gamma v}}\sum\limits_{s = 0}^{t - v} {{{(t - s - 1)}^{\underline {v - 1} }}} \lambda h(s + v - 1)f(t + v - 1,y(t + v - 1))} \right|_{t = v - 2}} \\
& + {C_2}\Gamma (v - 1)\\
& = {C_2}\Gamma (v - 1)\\
& =  0,
\end{align*}
therefore, $  C_2 = 0. $  \\
On the other hand, using boundary condition \(y(v+b)=0\)
\begin{align*}
y\left( {v + b} \right) & = {\left. { - {\Delta ^{ - v}}\lambda h(t + v - 1)f(t + v - 1,y(t + v - 1))} \right|_{t = v + b}} \\
& + {C_1}{(v + b)^{\underline {v - 1} }} + {C_2}{(v + b)^{\underline {v - 2} }}\\
& = {\left. { - \frac{1}{{\Gamma v}}\sum\limits_{s = 0}^{t - v} {{{(t - s - 1)}^{\underline {v - 1} }}} \lambda h(s + v - 1)f(t + v - 1,y(t + v - 1))} \right|_{t = v + b}} \\
& + {C_1}{(v + b)^{\underline {v - 1} }} \\
& = 0,
\end{align*}
\[{C_1}{(v + b)^{\underline {v - 1} }} = \frac{1}{{\Gamma v}}\sum\limits_{s = 0}^{t - v} {{{\left( {t - s - 1} \right)}^{\underline {v - 1} }}\lambda h(s + v - 1){{\left. {f(s + v - 1,y(s + v - 1))} \right|}_{t = v + b}}} \]
\[{C_1} = \frac{1}{{\Gamma v{{(v + b)}^{\underline {v - 1} }}}}\sum\limits_{s = 0}^b {{{\left( {v + b - s - 1} \right)}^{\underline {v - 1} }}\lambda h(s + v - 1)f(s + v - 1,y(s + v - 1))} \] \\
Using $ C_{1}$ and $ C_{2} $ in $ y(t) $,  we get
\[y(t) =  - \frac{1}{{\Gamma v}}\sum\limits_{s = 0}^{t - v} {{{(t - s - 1)}^{\underline {v - 1} }}} \lambda h(s + v - 1)f(s + v - 1,y(s + v - 1)) \]
 \[+ \frac{{{t^{\underline {v - 1} }}}}{{\Gamma v{{(v + b)}^{\underline {v - 1} }}}}\sum\limits_{s = 0}^b {{{\left( {v + b - s - 1} \right)}^{\underline {v - 1} }}\lambda h(s + v - 1)f(s + v - 1,y(s + v - 1))} \]
\begin{align*}
y(t) & = \sum\limits_{s = 0}^{t - v} {\left\{ {\frac{{{t^{\underline {v - 1} }}{{\left( {v + b - s - 1} \right)}^{\underline {v - 1} }}}}{{\Gamma v{{(v + b)}^{\underline {v - 1} }}}} - \frac{{{{(t - s - 1)}^{\underline {v - 1} }}}}{{\Gamma v}}} \right\}} \lambda h(s + v - 1)\\
& \cdot f(s + v - 1,y(s + v - 1))\\
& + \sum\limits_{s = t - v + 1}^b {\frac{{{t^{\underline {v - 1} }}{{\left( {v + b - s - 1} \right)}^{\underline {v - 1} }}}}{{\Gamma v{{(v + b)}^{\underline {v - 1} }}}}} \lambda h(s + v - 1)f(s + v - 1,y(s + v - 1))\\
\end{align*}
\[y(t) = \sum\limits_{s = 0}^b {G(t,s)} \lambda h(s + v - 1)f(s + v - 1,y(s + v - 1)),
\tag{2.7} \] \\
Consequently, we observe that $ y(t) $ implies that whenever $ y $ is a solution of $ (1.1)-(1.2) $, $ y $  is a fixed point of $ (2.6) $, as desired.\\
\end{proof}
\begin{theorem}\cite{12}\label{}
Let $ E $ be a banach space, and let $ \mathcal{K}\subset E $ be a cone in $ E $. Assume that $ \Omega_{1} $ and $ \Omega_{2} $ are open sets contained in $ E $ s. t. $ 0 \in \Omega_{1} $ and $ \overline{\Omega}_{1}\subseteq \Omega_{2} $, and let $ S:\mathcal{K}\cap{(\overline{\Omega}_{2}\setminus\Omega_{1})}\rightarrow \mathcal{K}$ be a completely continuous operator such that either
\begin{enumerate}
\item $ \left\| {Sy} \right\| \le \left\| y \right\| $ for $ y \in \mathcal{K} \cap \partial {\Omega _1} $ and $ \left\| {Sy} \right\| \ge \left\| y \right\| $ for $ y \in \mathcal{K} \cap \partial {\Omega _2} $; Or
\item $ \left\| {Sy} \right\| \ge \left\| y \right\| $ for $ y \in \mathcal{K} \cap \partial {\Omega _1} $ and $ \left\| {Sy} \right\| \le \left\| y \right\| $ for $ y \in \mathcal{K} \cap \partial {\Omega _2} $
\end{enumerate}
Then $ S $ has at least one fixed point in $ \mathcal{K}\cap{(\overline{\Omega}_{2}\setminus\Omega_{1})} $.\\
\end{theorem}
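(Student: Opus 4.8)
The plan is to deduce this classical compression–expansion theorem from the theory of the fixed point index for completely continuous maps on a cone. Since $S$ is completely continuous, the index $i(S,\mathcal{K}\cap\Omega,\mathcal{K})$ is well defined for each of the bounded open sets $\Omega\in\{\Omega_1,\Omega_2,\Omega_2\setminus\overline{\Omega}_1\}$ on whose cone-boundary $S$ has no fixed point, and I would invoke its four standard properties: normalization, additivity (excision), homotopy invariance, and the solution property that a nonzero index forces the existence of a fixed point. As a harmless first reduction I would assume $Sy\neq y$ for every $y\in\mathcal{K}\cap(\partial\Omega_1\cup\partial\Omega_2)$, since otherwise such a $y$ is already a fixed point in $\mathcal{K}\cap(\overline{\Omega}_2\setminus\Omega_1)$ and the proof is complete. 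Note also that because $0\in\Omega_1$ and $\Omega_1$ is open, there is a ball $B(0,r)\subseteq\Omega_1$, so $\|y\|\ge r>0$ on $\mathcal{K}\cap\partial\Omega_1$, and likewise on $\mathcal{K}\cap\partial\Omega_2$; this positivity is what makes the norm hypotheses usable.

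The core of the argument consists of two index computations. For the compression estimate I would show that whenever $\|Sy\|\le\|y\|$ on $\mathcal{K}\cap\partial\Omega$ one has $i(S,\mathcal{K}\cap\Omega,\mathcal{K})=1$. I would use the homotopy $H(\tau,y)=\tau Sy$ for $\tau\in[0,1]$: if $y=\tau Sy$ for some boundary point $y$, then $\tau\neq 0$ (as $y\neq 0$), and $\|y\|=\tau\|Sy\|\le\tau\|y\|$ forces $\tau\ge 1$, hence $\tau=1$ and $Sy=y$, contradicting the reduction. Thus $H$ is admissible, and homotopy invariance together with normalization gives $i(S,\mathcal{K}\cap\Omega,\mathcal{K})=i(0,\mathcal{K}\cap\Omega,\mathcal{K})=1$.

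For the expansion estimate I would show that $\|Sy\|\ge\|y\|$ on $\mathcal{K}\cap\partial\Omega$ yields $i(S,\mathcal{K}\cap\Omega,\mathcal{K})=0$. Here I would fix some $u_0\in\mathcal{K}\setminus\{0\}$ and consider the translation homotopy $H(\tau,y)=Sy+\tau u_0$, $\tau\ge0$. Since $S(\mathcal{K}\cap\overline{\Omega})$ is relatively compact, hence bounded, and $\overline{\Omega}$ is bounded, any solution of $y=Sy+\tau u_0$ satisfies $\tau\|u_0\|\le\|y\|+\|Sy\|\le C$, so for $\tau$ large the translated map has no fixed point in $\mathcal{K}\cap\overline{\Omega}$ and therefore index $0$; homotopy invariance then propagates this back to $S$ provided the homotopy stays fixed-point-free on the cone-boundary. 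Establishing this last point is the main obstacle: I must rule out $y=Sy+\tau u_0$ for $y\in\mathcal{K}\cap\partial\Omega$ and $\tau\ge0$, and it is precisely here that the expansion inequality $\|Sy\|\ge\|y\|$ (equivalently $Sy\neq\mu y$ for $\mu\in(0,1]$, which I would first derive from it) must be exploited, with the choice of $u_0$ and the cone structure arranged so that a boundary solution contradicts the norm bound. This index-$0$ step is considerably more delicate than the compression step and is the technical heart of the theorem.

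Finally I would assemble the two computations by additivity. In case (1) the compression estimate on $\partial\Omega_1$ gives $i(S,\mathcal{K}\cap\Omega_1,\mathcal{K})=1$ and the expansion estimate on $\partial\Omega_2$ gives $i(S,\mathcal{K}\cap\Omega_2,\mathcal{K})=0$, so the excision identity $i(S,\mathcal{K}\cap(\Omega_2\setminus\overline{\Omega}_1),\mathcal{K})=i(S,\mathcal{K}\cap\Omega_2,\mathcal{K})-i(S,\mathcal{K}\cap\Omega_1,\mathcal{K})=-1\neq0$ holds; by the solution property $S$ has a fixed point in $\mathcal{K}\cap(\overline{\Omega}_2\setminus\Omega_1)$. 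Case (2) is identical with the roles of the two boundaries reversed, giving index $1-0=1\neq0$ on the annular region and hence again a fixed point, which completes the plan.
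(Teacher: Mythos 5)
First, note that the paper does not prove this statement at all: it is Krasnosel'skii's cone compression--expansion theorem, quoted from reference [12] (Agarwal--Meehan--O'Regan), so your attempt can only be judged on its own merits, not against an internal argument. Your index-theoretic strategy is indeed the standard modern route, and several pieces are sound: the reduction to the case of no fixed points on $\mathcal{K}\cap(\partial\Omega_{1}\cup\partial\Omega_{2})$, the positivity of $\|y\|$ on both boundaries, the index-one computation via the homotopy $H(\tau,y)=\tau Sy$, and the final additivity/excision bookkeeping. But there are two genuine gaps. The first is structural: the indices $i(S,\mathcal{K}\cap\Omega_{1},\mathcal{K})$ and $i(S,\mathcal{K}\cap\Omega_{2},\mathcal{K})$ are undefined as you use them, because $S$ is given only on $\mathcal{K}\cap(\overline{\Omega}_{2}\setminus\Omega_{1})$, not on $\mathcal{K}\cap\overline{\Omega}_{1}$ or $\mathcal{K}\cap\overline{\Omega}_{2}$. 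A correct proof must first extend $S$ to a completely continuous map $\tilde{S}:\mathcal{K}\cap\overline{\Omega}_{2}\to\mathcal{K}$ (Dugundji's extension theorem does this: the extension's range lies in the closed convex hull of the compact set $S\bigl(\mathcal{K}\cap(\overline{\Omega}_{2}\setminus\Omega_{1})\bigr)$, which is again compact and contained in the closed convex cone $\mathcal{K}$), run the index argument for $\tilde{S}$, and then observe that the fixed point it produces lies in $\mathcal{K}\cap(\Omega_{2}\setminus\overline{\Omega}_{1})$, where $\tilde{S}=S$.

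The second gap is the one you flag yourself: the index-zero computation, which you rightly call the technical heart, is never carried out, and the tool you propose for it would fail. Admissibility of the translation homotopy $H(\tau,y)=Sy+\tau u_{0}$ requires $y\neq Sy+\tau u_{0}$ for all $y\in\mathcal{K}\cap\partial\Omega$ and $\tau>0$, and this does \emph{not} follow from $\|Sy\|\geq\|y\|$ for an arbitrary $u_{0}\in\mathcal{K}\setminus\{0\}$: the relation $y=Sy+\tau u_{0}$ only says $y-Sy\in\mathcal{K}\setminus\{0\}$, which is perfectly compatible with $\|Sy\|\geq\|y\|$ in a general cone. Concretely, in $\mathbb{R}^{2}$ with cone $\{(a,b):a,b\geq 0\}$ and norm $\|(a,b)\|=|a-b|+\varepsilon(|a|+|b|)$, take $y=(1,1)$ and $Sy=(1,\tfrac{1}{2})$: then $y-Sy=(0,\tfrac{1}{2})\in\mathcal{K}\setminus\{0\}$ while $\|Sy\|>\|y\|$ for small $\varepsilon$. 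The boundary condition $y-Sy\neq\tau u_{0}$ is the hypothesis of the separate ``$u_{0}$-type'' expansion lemma, a genuinely different assumption from the norm-type hypothesis in the statement. What the norm hypothesis does give you (after the boundary-fixed-point reduction, exactly as you derived) is $Sy\neq\mu y$ for all $\mu\in(0,1]$ together with $\inf_{y\in\mathcal{K}\cap\partial\Omega}\|Sy\|\geq\inf_{y\in\mathcal{K}\cap\partial\Omega}\|y\|>0$; the nontrivial lemma you need --- and must either prove or cite (it is Lemma 2.3.2 in Guo--Lakshmikantham, \emph{Nonlinear Problems in Abstract Cones}) --- is that these two conditions force $i(S,\mathcal{K}\cap\Omega,\mathcal{K})=0$. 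Until that lemma is supplied, your argument establishes only the easy half of the theorem.
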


\section{Main Result}

To prove our main result let us state all required theorems for the existence of positive solutions to problem $ (1.1)-(1.2) $ \\
For this, let \[\eta : = \frac{1}{{\sum\limits_{s = 0}^b {G(s + v - 1,s)h(s + v - 1)} }},\]
\[\sigma : = \frac{1}{\gamma{\sum\nolimits_{s = \left[ {\frac{{b + v}}{4} - v + 1} \right]}^{\left[ {\frac{{3(b + v)}}{4} - v + 1} \right]} {G\left( {\left[ {\frac{{b - v}}{2}} \right] + v,s} \right)} }},\]
where $ \eta $ and $ \sigma $ are well defined by lemma $ 2.4 $ and $ \gamma $ is the number given by lemma $ 2.4.(3). $ \\
In the sequel, we present some conditions on $ f $ that will imply the existance of positive solutions. \\
$ H1:$ $ \exists $  a number $ r>0 $ such that $ f(t,y)\leq\frac{\eta r}{\lambda}$ whenever $ 0\leq y\leq r $.\\
$ H2: $ $ \exists $  a number $ r>0 $ such that $ f(t,y)\geq\frac{\sigma r}{\lambda}$ whenever $ \gamma r\leq y\leq r $. \\
$ H3: $ $ \mathop {\lim }\limits_{y \to {0^ + }} \mathop {\min }\limits_{t \in {{\left[ {v - 2,v + b} \right]}_{{\mathbb{N}_{v - 2}}}}} \frac{{f(t,y)}}{y} =  + \infty $. \\
$ H4: $ $ \mathop {\lim }\limits_{y \to {\infty ^ + }} \mathop {\min }\limits_{t \in {{\left[ {v - 2,v + b} \right]}_{{\mathbb{N}_{v - 2}}}}} \frac{{f(t,y)}}{y} =  + \infty $. \\
For our purpose, let $ E $ be a Banach space defined by
\[
 E = \left\{ {y:{{[v - 2,v + b]}_{{\mathbb{N}_{v - 2}}}} \to \mathbb{R},\,\,\,y(v - 2) = y(v + b)= 0} \right\},
\tag{3.1} \]
with norm, $ \left\| y \right\| = \max \left| {y(t)} \right|, \,\,\,\,\,\,\,\,\,\,\,\,\,\,\,\,t \in {[v - 2,v + b]_{{\mathbb{N}_0}}} $. \\
Also, define the cones
\[
{\mathcal{K}_0} = \left\{ {y \in E:0 \le y(t),\,\,\mathop {min}\limits_{t \in \left[ {\frac{{v + b}}{4},\frac{{3(v + b)}}{4}} \right]} y(t) \ge \gamma \left\| {y(t)} \right\|} \right\}.
\tag{3.2} \]
In order to prove our first existence, let us prove the following important lemma.

\begin{lemma}\label{}
$ F({\mathcal{K}_0}) \subseteq {\mathcal{K}_0} $  i.e., $ F $ leaves the cone $ \mathcal{K}_0 $ invariant.
\end{lemma}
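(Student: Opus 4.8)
The plan is to check the two defining conditions of the cone $\mathcal{K}_0$ for $Fy$ whenever $y \in \mathcal{K}_0$: nonnegativity of $Fy$ on the whole discrete interval, and the lower bound $\min_{t \in [\frac{v+b}{4}, \frac{3(v+b)}{4}]} Fy(t) \ge \gamma \|Fy\|$. First I fix an arbitrary $y \in \mathcal{K}_0$, so that $y(t) \ge 0$ throughout, and observe that every factor in the sum $(2.6)$ is nonnegative: $\lambda > 0$ by hypothesis, $h(s+v-1) \ge 0$ because $h$ takes values in $[0,\infty)$, $G(t,s) \ge 0$ by Lemma $2.4(1)$, and $f(s+v-1, y(s+v-1)) \ge 0$ since $f$ is nonnegative on the nonnegative arguments supplied by $y \in \mathcal{K}_0$. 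Hence each summand is nonnegative and $Fy(t) \ge 0$ for all $t$. Moreover $Fy$ lies in $E$: the boundary conditions $Fy(v-2) = Fy(v+b) = 0$ hold because $G(t,s)$ vanishes at the endpoints $t = v-2$ and $t = v+b$, exactly as in the derivation of $(2.6)$, so that $Fy$ inherits the boundary data of the Green's-function solution of $(1.1)$-$(1.2)$.

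The heart of the argument is the cone inequality, where Lemma $2.4(2)$-$(3)$ does all the work. Set $w_s := \lambda h(s+v-1) f(s+v-1, y(s+v-1)) \ge 0$. Using that the maximum of a sum is at most the sum of the maxima, together with $\max_t G(t,s) = G(s+v-1,s)$ from Lemma $2.4(2)$, I obtain
\[
\|Fy\| = \max_{t} \sum_{s=0}^b G(t,s)\, w_s \le \sum_{s=0}^b G(s+v-1, s)\, w_s .
\]
Conversely, since the minimum of a sum dominates the sum of the minima, and since $w_s \ge 0$ lets me apply the termwise bound $\min_{t \in [\frac{v+b}{4}, \frac{3(v+b)}{4}]} G(t,s) \ge \gamma G(s+v-1,s)$ of Lemma $2.4(3)$, I get
\[
\min_{t \in [\frac{v+b}{4}, \frac{3(v+b)}{4}]} Fy(t) \ge \gamma \sum_{s=0}^b G(s+v-1, s)\, w_s .
\]

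Chaining these two estimates yields $\min_{t \in [\frac{v+b}{4}, \frac{3(v+b)}{4}]} Fy(t) \ge \gamma \|Fy\|$, which is exactly the defining inequality of $\mathcal{K}_0$; combined with the nonnegativity and boundary conditions established above, this shows $Fy \in \mathcal{K}_0$ and hence $F(\mathcal{K}_0) \subseteq \mathcal{K}_0$. I expect the only real care to be bookkeeping: securing the nonnegativity of every weight $w_s$ before passing the extremal Green's-function bounds through the finite sums, and checking that the index set $[\frac{v+b}{4}, \frac{3(v+b)}{4}]_{\mathbb{N}_{v-2}}$ appearing in the definition of $\mathcal{K}_0$ matches the one in Lemma $2.4(3)$. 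Beyond Lemma $2.4$ no further estimate is required; the invariance is essentially a direct consequence of the sign and extremal properties of the Green's function.
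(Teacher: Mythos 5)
Your proof is correct and takes essentially the same route as the paper: both arguments rest on passing the extremal Green's-function bounds of Lemma 2.4(2)--(3) through the nonnegative weighted sum, giving $\min_{t} (Fy)(t) \ge \gamma \sum_{s=0}^{b} G(s+v-1,s)\,w_s \ge \gamma \left\| Fy \right\|$. If anything, your write-up is more careful than the paper's, which compresses the chain into one display with a dangling $t$ and omits the nonnegativity and boundary-condition checks that you (rightly) include.
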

\begin{proof} Observe that
\begin{align*}
\mathop {min}\limits_{t \in \left[ {\frac{{v + b}}{4},\frac{{3(v + b)}}{4}} \right]} (Fy)(t) & = \mathop {min}\limits_{t \in \left[ {\frac{{v + b}}{4},\frac{{3(v + b)}}{4}} \right]} \sum\limits_{s = 0}^b {G(t,s)\lambda h(s + v - 1)} \\
& \times f(s + v - 1,y(s + v - 1))\\
& \ge \gamma \sum\limits_{s = 0}^b {G(t,s)\lambda h(s + v - 1)f(s + v - 1,y(s + v - 1))} \\
& \ge \gamma \mathop {\max }\limits_{t \in {{\left[ {v - 2,v + b} \right]}_{{N_{v - 2}}}}} \sum\limits_{s = 0}^b {G(t,s)\lambda h(s + v - 1)} \\
& \times f(s + v - 1,y(s + v - 1))\\
& = \gamma \left\| {Fy} \right\|,
\end{align*}
which implies $ Fy \in {\mathcal{K}_0} $.
\end{proof}

\begin{theorem}\label{}
Assume that $ \exists $ distinct numbers $ r_1>0 $ and $ r_2>0 $ with $ r_1< r_2 $ such that $ f $ satisfies the condition $ H1 $ at $ r_1 $ and $ H2 $ at $ r_2 $. Then the fractional boundary value problem $ (1.1)-(1.2) $ has at least one positive solution say $ y_0 $ satisfying  $ {r_1} \le \left\| {{y_0}} \right\| \le {r_2} $.
\end{theorem}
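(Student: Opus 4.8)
The plan is to realize $y_0$ as a fixed point of the operator $F$ from $(2.6)$ by invoking the Krasnosel'skii fixed point theorem stated above, used in its cone compression/expansion form. We already know from the invariance lemma just established that $F(\mathcal{K}_0) \subseteq \mathcal{K}_0$, so the natural choice is to take $S = F$ with the cone $\mathcal{K}_0$ of $(3.2)$ and the two open norm balls
\[
\Omega_1 = \{y \in E : \|y\| < r_1\}, \qquad \Omega_2 = \{y \in E : \|y\| < r_2\}.
\]
Since $0 < r_1 < r_2$, we have $0 \in \Omega_1$ and $\overline{\Omega}_1 \subseteq \Omega_2$, so the structural hypotheses on $\Omega_1,\Omega_2$ hold. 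It then remains to verify that $F$ is completely continuous and to confirm one of the two norm alternatives on the boundaries $\mathcal{K}_0 \cap \partial\Omega_1$ and $\mathcal{K}_0 \cap \partial\Omega_2$.

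Complete continuity is essentially automatic here: the variable $t$ ranges over the finite set $[v-2,v+b]_{\mathbb{N}_{v-2}}$, so $E$ is finite dimensional, and $F$ is a finite sum of products of the continuous $f$ with the fixed kernel $G(t,s)h(s+v-1)$; hence $F$ is continuous and maps bounded sets to bounded sets, which in finite dimensions gives complete continuity. The first substantive step is the upper estimate on $\partial\Omega_1$. For $y \in \mathcal{K}_0$ with $\|y\| = r_1$ we have $0 \le y(t) \le r_1$ for every $t$, so condition $H1$ at $r_1$ gives $f(s+v-1,y(s+v-1)) \le \eta r_1/\lambda$. Invoking property (2) of the Green's function, namely $G(t,s) \le G(s+v-1,s)$, I would estimate
\[
(Fy)(t) \le \lambda\,\frac{\eta r_1}{\lambda}\sum_{s=0}^{b} G(s+v-1,s)\,h(s+v-1) = \eta r_1\cdot\eta^{-1} = r_1,
\]
where the last equality is exactly the definition of $\eta$. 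Taking the maximum over $t$ yields $\|Fy\| \le r_1 = \|y\|$ on $\mathcal{K}_0 \cap \partial\Omega_1$.

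The lower estimate on $\partial\Omega_2$ is the delicate part. For $y \in \mathcal{K}_0$ with $\|y\| = r_2$, the cone condition forces $y(t) \ge \gamma r_2$ on the middle block $[\tfrac{v+b}{4},\tfrac{3(v+b)}{4}]$, while $y(t)\le r_2$ everywhere; thus on the truncated index range $J = [\tfrac{b+v}{4}-v+1,\ \tfrac{3(b+v)}{4}-v+1]$ we have $\gamma r_2 \le y(s+v-1) \le r_2$, so condition $H2$ at $r_2$ applies and gives $f(s+v-1,y(s+v-1)) \ge \sigma r_2/\lambda$. Evaluating $Fy$ at the distinguished node $t^{*} = [\tfrac{b-v}{2}]+v$, discarding the nonnegative terms with $s \notin J$, and using the Green's-function minimum of property (3) together with the definition of $\sigma$, I would obtain
\[
\|Fy\| \ge (Fy)(t^{*}) \ge \lambda\,\frac{\sigma r_2}{\lambda}\sum_{s \in J} G(t^{*},s)\,h(s+v-1) \ge r_2,
\]
the final bound being precisely what the constant $\sigma$ is arranged to produce. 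Hence $\|Fy\| \ge r_2 = \|y\|$ on $\mathcal{K}_0 \cap \partial\Omega_2$. With both alternatives in place, case (1) of the Krasnosel'skii theorem yields a fixed point $y_0 \in \mathcal{K}_0 \cap (\overline{\Omega}_2 \setminus \Omega_1)$, i.e. $r_1 \le \|y_0\| \le r_2$, and membership in $\mathcal{K}_0$ gives $y_0 \ge 0$, so $y_0$ is a positive solution. I expect the main obstacle to be the bookkeeping in this last estimate: matching the truncated index set $J$ exactly to the block on which the cone supplies $y \ge \gamma\|y\|$, and threading the factor $\gamma$, the Green's-function minimum, and the weight $h$ through the computation so that the constant collapses to exactly $\sigma^{-1}$ and recovers $r_2$ rather than a strictly smaller multiple of it.
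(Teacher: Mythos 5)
Your proposal follows essentially the same route as the paper: both realize the solution as a fixed point of the operator $F$ on the cone $\mathcal{K}_0$ via Krasnosel'skii's theorem, with the $H1$/$\eta$ estimate giving $\left\| Fy \right\| \le \left\| y \right\|$ on the boundary at radius $r_1$ and the $H2$/$\sigma$ estimate giving $\left\| Fy \right\| \ge \left\| y \right\|$ on the boundary at radius $r_2$. If anything, your version is tidier than the paper's: you take $\Omega_1, \Omega_2$ to be genuine open balls in $E$ (the paper writes $\Omega_1 = \left\{ y \in \mathcal{K}_0 : \left\| y \right\| \ge r_1 \right\}$, surely a typo), you explain why the cone condition makes $H2$ applicable on the truncated index range, and you evaluate at the node $\left[\frac{b-v}{2}\right]+v$ matching the definition of $\sigma$, where the paper's estimate (3.4) keeps a generic $t$.
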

\begin{proof} As $ F $ is completely continuous operator and $ F:{\mathcal{K}_0} \to {\mathcal{K}_0} $,
let $ {\Omega _1} = \left\{ {y \in {\mathcal{K}_0}: \left\| y \right\| \ge {r_1}} \right\}$.
Then for any $y \in {\mathcal{K}_0} \cap \partial {\Omega _1}$, we have
\begin{align*}
\left\| {Fy} \right\| & = \mathop {\max }\limits_{t \in {{\left[ {v - 2,v + b} \right]}_{{N_{v - 2}}}}} \lambda \sum\limits_{s = 0}^b {G\left( {t,s} \right)h\left( {s + v - 1} \right)f\left( {s + v - 1,y\left( {s + v - 1} \right)} \right),} \\
& \le \lambda \sum\limits_{s = 0}^b {G\left( {s + v - 1,s} \right)h\left( {s + v - 1} \right)f\left( {s + v - 1,y\left( {s + v - 1} \right)} \right)} \\
& \le \lambda \frac{{\eta {r_1}}}{\lambda }\sum\limits_{s = 0}^b {G(s + v - 1,s)h(s + v - 1)}\,\,\,\,\,\,\,\,\,\,\,\,\,\,\,  from  H1 \\
& = {r_1}
\tag{3.3} \\
& = \left\| y \right\|,
\end{align*}
hence, $ \left\| {Fy} \right\| = \left\| y \right\| $, for $ y \in {\mathcal{K}_0} \cap \partial {\Omega _1} $. \\
Now, let $ {\Omega _2} = \left\{ {y \in {\mathcal{K}_0}:\left\| y \right\| \le {r_2}} \right\} $. Then for any $ y \in {\mathcal{K}_0} \cap \partial {\Omega _2}$ we have,
\begin{align*}
Fy(t) & = \lambda \sum\limits_{s = 0}^b {G\left( {t,s} \right)h\left( {s + v - 1} \right)f\left( {s + v - 1,y\left( {s + v - 1} \right)} \right),}\\
& \ge \lambda \sum\limits_{s = \left[ {\frac{{b + v}}{4} - v + 1} \right]}^{\left[ {\frac{{3(b + v)}}{4} - v + 1} \right]} {G\left( {t,s} \right)h\left( {s + v - 1} \right)f\left( {s + v - 1,y\left( {s + v - 1} \right)} \right)} \\
& \ge \lambda \sum {G(t,s)h(s + v - 1)\frac{{\sigma {r_2}}}{\lambda }} \,\,\,\,\,\,\,\,\,\,\,\,\,\,\,\,\,\,\, from  H2 \\
& = {r_2}
\tag{3.4} \\
& = \left\| y \right\|,
\end{align*}
hence, $ \left\| {Fy} \right\| \ge \left\| y \right\| $, for  $ y \in {\mathcal{K}_0} \cap \partial {\Omega _2} $. \\
So, it follows form theorem $ 2.8 $ that there exists $ y_0 \in {\mathcal{K}_0} $ such that $ {Fy}_0 = y_0 $ i. e., fractional boundary value problem $ (1.1)-(1.2) $ has a positive solution, say $ y_0 $ satisfying  $ {r_1} \le \left\| {{y_0}} \right\| \le {r_2} $.
\end{proof}

In the next theorem we give the existence of at least two positive solutions.

\begin{theorem}\label{}
Assume that $ f $ satisfies condition $ H1 $ and $ H3 $. Then the fractional boundary value problem $ (1.1)-(1.2) $ has at least two positive solutions, say $ y_1 $ and $ y_2 $ such that $0 \le \left\| {{y_1}} \right\| < m < \left\| {{y_2}} \right\|$.
\end{theorem}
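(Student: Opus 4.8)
The plan is to apply the Krasnosel'skii fixed point theorem (Theorem 2.8) twice, on two adjacent annular regions of the cone $\mathcal{K}_0$ that share the common sphere $\|y\|=m$. Throughout I use that $F$ is completely continuous and leaves $\mathcal{K}_0$ invariant (Lemma 3.1), so Theorem 2.8 is available on any annulus $\{y\in\mathcal{K}_0:\rho_1\le\|y\|\le\rho_2\}$. I take $m$ to be the number $r$ furnished by $H1$; on the sphere $\|y\|=m$ the compression estimate $\|Fy\|\le\|y\|$ holds by exactly the computation used for the $H1$ part of the previous theorem (Theorem 3.2), namely bounding $f$ by $\eta m/\lambda$ and using $\sum_s G(s+v-1,s)h(s+v-1)=1/\eta$. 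This single sphere serves as the outer boundary for the first (small) solution and as the inner boundary for the second (large) solution.

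For the first solution I would exploit the superlinearity at the origin encoded in $H3$. Because $\min_t f(t,y)/y\to+\infty$ as $y\to 0^+$, I may fix a constant $N$ large enough to dominate the reciprocal of the weighted Green's-function sum $\gamma\sum_s G(t,s)h(s+v-1)$ taken over the sub-block of $s$ corresponding to $t\in[\frac{v+b}{4},\frac{3(v+b)}{4}]$, and then pick $r_0\in(0,m)$ so small that $f(t,y)\ge Ny$ whenever $0<y\le r_0$. For $y\in\mathcal{K}_0$ with $\|y\|=r_0$ the cone condition gives $\gamma r_0\le y(t)\le r_0$ on that sub-block, hence $f(t,y(t))\ge N\gamma r_0$ there; summing $\lambda G h f$ over the sub-block (and discarding the remaining nonnegative terms) yields $\|Fy\|\ge\|y\|$ on $\|y\|=r_0$. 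Theorem 2.8, with expansion on $\|y\|=r_0$ and compression on $\|y\|=m$, then produces a fixed point $y_1$ with $r_0\le\|y_1\|\le m$.

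For the second solution I need a radius $R>m$ on which $F$ expands, i.e. $\|Fy\|\ge\|y\|$ for $\|y\|=R$. The mechanism is the same as at $r_0$ but now driven by the size of $f$ for large argument: for $y\in\mathcal{K}_0$ with $\|y\|=R$ the cone condition gives $y(t)\ge\gamma R$ on the sub-block, which is large once $R$ is large, so a superlinear-at-infinity estimate $f(t,y)\ge My$ (valid for $y$ large, with $M$ chosen as $N$ above) again forces $\|Fy\|\ge\|y\|$. Applying Theorem 2.8 with compression on $\|y\|=m$ and expansion on $\|y\|=R$ gives $y_2$ with $m\le\|y_2\|\le R$. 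To upgrade to the strict chain $0\le\|y_1\|<m<\|y_2\|$ I would arrange the compression on $\|y\|=m$ to be strict (for instance by choosing $m$ so that $f(t,y)<\eta m/\lambda$ on $0\le y\le m$), which excludes a fixed point sitting exactly on that sphere and separates the two solutions.

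The step I expect to be the main obstacle is the large-radius expansion. The hypotheses actually listed, $H1$ and $H3$, constrain $f$ only for small arguments and say nothing as $y\to\infty$; with them alone one obtains the small solution $y_1$ but has no leverage to make $F$ expand on a large sphere, so the solution $y_2$ with $\|y_2\|>m$ cannot be reached. That expansion genuinely requires a superlinear-growth condition at infinity, precisely hypothesis $H4$, $\lim_{y\to\infty^+}\min_t f(t,y)/y=+\infty$ (or, alternatively, $H2$ imposed at some radius $r_2>m$), and I expect $H4$ is the hypothesis intended here alongside $H1$ and $H3$. Confirming that this extra growth assumption is the right one, together with the strictness bookkeeping at $m$, are the delicate points of the argument.
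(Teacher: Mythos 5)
Your proposal takes essentially the same route as the paper: a double application of Krasnosel'skii (Theorem 2.8) on the two annuli meeting at the sphere $\|y\|=m$, with compression there from $H1$, expansion on a small sphere from the superlinearity at the origin, and expansion on a large sphere from superlinearity at infinity. Your diagnosis of the hypothesis gap is also confirmed by the paper's own proof: its second expansion step asserts the existence of $\tau>0$ and $R_1>0$ with $f(t,y)\ge\frac{(\sigma+\tau)}{\lambda}y$ for all $y\ge R_1$, which is precisely the at-infinity condition $H4$ (mislabeled there as $H3$), so the theorem as stated does tacitly require $H4$ in addition to $H1$ and $H3$, exactly as you predicted.
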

\begin{proof} From the assumptions, $ \exists $ $ \varepsilon  > 0 $ and $ r > 0 $ with $ r < m $ s. t. for $ 0 \leq y \leq r $,
$ f\left( {t,y} \right) \ge \frac{{(\sigma  + \varepsilon )}}{\lambda }y $,  \,\,\,\,\,\,\,\,\, $ t \in {\left[ {v - 2,v + b} \right]_{{\mathbb{N}_{v - 2}}}} $. \\
Let $ r_1 \in (0,r) $ and $ \left[ {\frac{{b - v}}{2}} \right] + v \in \left[ {\frac{{b + v}}{4},\frac{{3(b + v)}}{4}} \right] $. \\
Hence, for $ y \in \partial {\Omega _r} $, we have
\begin{align*}
\left( {Fy} \right)\left( {\left[ {\frac{{b - v}}{2}} \right] + v} \right) & = \sum\limits_{s = 0}^b G \left( {\left[ {\frac{{b - v}}{2}} \right] + v,s} \right)\lambda h(s + v - 1) \\
& \times f(s + v - 1,y(s + v - 1))\\
& \ge \lambda \sum\limits_{s = 0}^b G \left( {\left[ {\frac{{b - v}}{2}} \right] + v,s} \right)h(s + v - 1)\frac{{(\sigma  + \varepsilon )}}{\lambda }y \\
& \ge \lambda \frac{{(\sigma  + \varepsilon )}}{\lambda }\left\| y \right\|\sum\limits_{s = \left[ {\frac{{v + b}}{4} - v + 1} \right]}^{s = \left[ {\frac{{3(v + b)}}{4} - v + 1} \right]} {G\left( {\left[ {\frac{{b - v}}{2}} \right] + v,s} \right)} \\
& \times h(s + v - 1) \\
& > \sigma \left\| y \right\|\cdot \frac{1}{\sigma }\\
& = \left\| y \right\| = r.
\tag{3.5}
\end{align*}
Thus, $ \left\| {Fy} \right\| > \left\| y \right\| $, for $ y \in {\mathcal{K}_0} \cap \partial {\Omega _r} $.

On the other hand, suppose $ H3 $ holds, then there exists $ \tau>0 $ and $ {R_1}>0 $ such that  $ f(t,y) \ge \frac{{(\sigma  + \tau )}}{\lambda }y $, $ \forall \,y \ge {R_1} $, $ t \in {\left[ {v - 2,v + b} \right]_{{\mathbb{N}_{v - 2}}}} $. \\ Now let $ R $ such that, $ R > \max \left( {m,{\raise0.7ex\hbox{${{R_1}}$} \!\mathord{\left/
 {\vphantom {{{R_1}} \gamma }}\right.\kern-\nulldelimiterspace}
\!\lower0.7ex\hbox{$\gamma $}}} \right) $ then, we have
\begin{align*}
\left( {Fy} \right)\left( {\left[ {\frac{{b - v}}{2}} \right] + v} \right) & = \sum\limits_{s = 0}^b G \left( {\left[ {\frac{{b - v}}{2}} \right] + v,s} \right)\lambda h(s + v - 1) \\
& \times f(s + v - 1,y(s + v - 1))\\
& \ge \lambda \sum\limits_{s = 0}^b {G\left( {\left[ {\frac{{b - v}}{2}} \right] + v,s} \right)h\left( {s + v - 1} \right)\frac{{(\sigma  + \tau )}}{\lambda }y} \\
& \ge \lambda \frac{{(\sigma  + \tau )}}{\lambda }\left\| y \right\|\sum\limits_{s = \left[ {\frac{{b + v}}{4} - v + 1} \right]}^{\left[ {\frac{{3(b + v)}}{4} - v + 1} \right]} {G\left( {t,s} \right)h\left( {s + v - 1} \right)} \\
& \ge \sigma \left\| y \right\|\cdot \frac{1}{\sigma }\\
& = R,
\tag{3.6}
\end{align*}
hence,  $ \left\| {Fy} \right\| \geq \left\| y \right\| $, for $ y \in {\mathcal{K}_0} \cap \partial {\Omega _R} $.

Now, for any $ y \in \partial {\Omega _m} $, $ H1 $ implies that, $ f\left( {t,y} \right) \le \frac{{\eta m}}{\lambda }$, $ t \in {\left[ {v - 2,v + b} \right]_{{\mathbb{N}_{v - 2}}}} $. \\
Let
\begin{align*}
 Fy(t) & = \lambda \sum\limits_{s = 0}^b {G\left( {t,s} \right)h\left( {s + v - 1} \right)f\left( {s + v - 1,y\left( {s + v - 1} \right)} \right)} \\
& \le \lambda \sum\limits_{s = 0}^b {G\left( {s + v - 1,s} \right)h\left( {s + v - 1} \right)\cdot \frac{{\eta m}}{\lambda }} \\
& = \eta m\cdot \frac{1}{\eta }\\
& = m = \left\| y \right\|,
\tag{3.7}
\end{align*}
hence, $ \left\| {Fy} \right\| \le \left\| y \right\| $, for $ y \in {\mathcal{K}_0} \cap \partial {\Omega _m} $.\\
Therefore from theorem $ 2.8 $ it implies that there are two fixed points $ y_1 $ and $ y_2 $ of operator $ F $ s. t. $ 0 \le \left\| {{y_1}} \right\| < m < \left\| {{y_2}} \right\| $.
\end{proof}

\begin{theorem}\label{}
Assume that, conditions $ H2 $ and $ H4 $ holds, $ f>0 $ for  $ t \in {\left[ {v - 2,v + b} \right]_{{\mathbb{N}_{v - 2}}}} $. Then fractional boundary value problem $ (1.1)-(1.2) $ has at least two positive solutions, say  $ y_1 $ and $ y_2 $ such that $0 \le \left\| {{y_1}} \right\| < m < \left\| {{y_2}} \right\|$.
\end{theorem}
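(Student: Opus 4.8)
The plan is to reprise the structure of the preceding theorem and to apply the Krasnosel'skii theorem (Theorem 2.8) twice on the cone $\mathcal{K}_0$, using three nested open balls $\Omega_{r_1}\subset\Omega_m\subset\Omega_R$ of radii $r_1<m<R$. The operator $F$ is completely continuous and, by the invariance lemma proved above, satisfies $F(\mathcal{K}_0)\subseteq\mathcal{K}_0$, so it suffices to produce three boundary estimates --- an expansion on $\mathcal{K}_0\cap\partial\Omega_{r_1}$, a compression on $\mathcal{K}_0\cap\partial\Omega_m$, and an expansion on $\mathcal{K}_0\cap\partial\Omega_R$ --- and then to extract one fixed point from each annular region. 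Throughout I would evaluate $Fy$ at the point $t_\ast=\left[\frac{b-v}{2}\right]+v$, which by Lemma 2.4 lies in $\left[\frac{v+b}{4},\frac{3(v+b)}{4}\right]$, so that the cone inequality $\min y(t)\ge\gamma\|y\|$ and the constant $\sigma$ can be exploited exactly as in $(3.5)$--$(3.6)$.

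For the inner expansion I would invoke $H2$ at the radius $r_1$: for $y\in\mathcal{K}_0\cap\partial\Omega_{r_1}$ the cone property gives $\gamma r_1\le y(t)\le r_1$ on $\left[\frac{v+b}{4},\frac{3(v+b)}{4}\right]$, whence $f\ge\frac{\sigma r_1}{\lambda}$ there, and summing $G(t_\ast,s)h(s+v-1)$ over the reduced index range together with the definition of $\sigma$ yields $\|Fy\|\ge(Fy)(t_\ast)\ge r_1=\|y\|$, precisely as in $(3.4)$. For the outer expansion I would invoke $H4$: since $\min_t f(t,y)/y\to+\infty$ as $y\to\infty$, there exist $\tau>0$ and $R_1>0$ with $f(t,y)\ge\frac{\sigma+\tau}{\lambda}y$ for all $y\ge R_1$; choosing $R>\max\!\left(m,R_1/\gamma\right)$ forces $y(s+v-1)\ge\gamma R>R_1$ on the reduced range for every $y\in\mathcal{K}_0\cap\partial\Omega_R$, so the computation of $(3.6)$ gives $\|Fy\|\ge\|y\|$. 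The hypothesis $f>0$ then guarantees that the fixed points obtained are genuinely positive.

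The delicate point --- and the step I expect to be the main obstacle --- is the compression estimate $\|Fy\|\le\|y\|=m$ on $\mathcal{K}_0\cap\partial\Omega_m$. Since $f$ is continuous and $y$ ranges over the bounded set $0\le y(t)\le m$ on the finite index set, the quantity $M:=\max f(t,y)$ over this range is finite, and Lemma 2.4 together with the definition of $\eta$ gives $\|Fy\|\le\lambda M/\eta$; the required inequality is therefore equivalent to $M\le\eta m/\lambda$, i.e. to an $H1$-type upper bound at the level $m$. This control is \emph{not} delivered by $H2$ and $H4$ alone, both of which only bound $f$ from below (and $H4$ in fact forces $f$ to be superlinear at infinity), so the whole argument hinges on securing such a compression --- exactly the role played by $H1$ in the preceding theorem --- either by imposing it directly at $m$ or by choosing $m$ in a range where the continuity bound $M\le\eta m/\lambda$ can be verified. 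Once the three estimates are in hand, applying Theorem 2.8 to the pairs $(\Omega_{r_1},\Omega_m)$ and $(\Omega_m,\Omega_R)$ produces fixed points $y_1,y_2$ of $F$ with $r_1\le\|y_1\|\le m\le\|y_2\|\le R$, and the strict inequalities in the boundary estimates sharpen this to $0\le\|y_1\|<m<\|y_2\|$, giving the two desired positive solutions.
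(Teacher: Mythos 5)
Your plan runs expansion--compression--expansion: an expansion at a small radius $r_1$ from H2, an expansion at a large radius $R$ from the literal (superlinear) reading of H4, and a compression at $m$ in the middle, which you correctly observe cannot be extracted from H2 and H4 alone since both only bound $f$ from below. The paper's own proof is the mirror image of this. It produces a \emph{compression} at a small radius $r_1$, an \emph{expansion} at $m$, and a \emph{compression} at a large radius $R_2$, and then applies Theorem 2.8 to the pairs $(\Omega_{r_1},\Omega_m)$ and $(\Omega_m,\Omega_{R_2})$. Concretely, the paper asserts ``from H2'' that there exist $\varepsilon\in(0,\eta)$ and $0<r<m$ with $f(t,y)\le\frac{(\eta-\varepsilon)}{\lambda}y$ for $0\le y\le r$, giving $\|Fy\\|<\|y\|$ on $\partial\Omega_{r_1}$ for $r_1\in(0,r)$; it asserts ``from H4'' that there exist $\tau\in(0,\eta)$ and $R_0>0$ with $f(t,y)\le\frac{\tau}{\lambda}y$ for $y\ge R_0$, so that with $M:=\max_{[v-2,v+b]\times[0,R_0]}f$ one has $f(t,y)\le\frac{\tau y+M}{\lambda}$ for all $y\ge0$, and choosing $R_2>\max\{M/(\eta-\tau),\,2m\}$ gives $\|Fy\|\le\frac{\tau R_2+M}{\eta}<R_2$ on $\partial\Omega_{R_2}$; finally the expansion at $m$ comes from the cone inequality $\gamma m\le y(t)\le m$ on the quarter interval together with the definition of $\sigma$, i.e.\ H2 applied with its radius equal to $m$ (which is also where the otherwise undefined number $m$ in the statement comes from).

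The gap you flagged is therefore genuine, but it is a defect of the theorem's printed hypotheses rather than something the paper's proof overcomes. Neither upper bound the paper uses follows from H2 or H4 as stated: H2 is a lower bound, and H4 as printed ($\min_t f(t,y)/y\to+\infty$) is superlinearity at infinity, which actually \emph{contradicts} the bound $f(t,y)\le\frac{\tau}{\lambda}y$ with $\tau<\eta$ for large $y$. The paper's argument is really the sublinear companion of Theorem 3.3: it is valid only if H4 (and the hypothesis governing the behaviour near $0$) are replaced by conditions of the form $\lim_{y\to0^+}\max_t f(t,y)/y=0$ and $\lim_{y\to\infty}\max_t f(t,y)/y=0$, with H2 imposed at $r=m$. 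Your route would likewise become a proof if an H1-type upper bound were imposed at $m$, but then the statement essentially duplicates Theorem 3.3. Either repair yields a correct theorem; neither is the theorem as printed, and under the literal hypotheses your conclusion that no compression surface exists is the accurate one.
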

\begin{proof} Suppose that $ H2 $ holds, then there exists $ \varepsilon  > 0 $ $ (\varepsilon  < \eta ) $ and $ 0 < r < m $ such that $ f\left( {t,y} \right) \le \frac{{(\eta  - \varepsilon )}}{\lambda }y $, $ 0\leq y \leq r $,\,\,\,\,\,$ t \in {\left[ {v - 2,v + b} \right]_{{\mathbb{N}_{v - 2}}}} $. \\
Let $ r_1 \in (0,r) $, then for $ y \in \partial {\Omega _{{r_1}}} $, we have
\begin{align*}
Fy(t) & = \lambda \sum\limits_{s = 0}^b {G\left( {t,s} \right)h\left( {s + v - 1} \right)f\left( {s + v - 1,y\left( {s + v - 1} \right)} \right)} \\
& \le \lambda \sum\limits_{s = 0}^b {G\left( {s + v - 1,s} \right)h\left( {s + v - 1} \right)\cdot \frac{{(\eta  - \varepsilon )}}{\lambda }{r_1}} \\
& < \eta {r_1}\sum\limits_{s = 0}^b {G\left( {s + v - 1,s} \right)h\left( {s + v - 1} \right)} \\
& < \eta {r_1}\cdot \frac{1}{\eta }\\
& = {r_1} = \left\| y \right\|,
\tag{3.8}
\end{align*}
hence, we have  $ \left\| {Fy} \right\| < \left\| y \right\| $, for $ y \in \partial {\Omega _{{r_1}}} $.

On the other hand, suppose that $ H4 $ holds, then there exists $ 0 < \tau < \eta  $ and $ R_0 >0 $ s. t. $ f\left( {t,y} \right) \le \tau \eta $, $ y \geq R_0 $,  $ t \in {\left[ {v - 2,v + b} \right]_{{\mathbb{N}_{v - 2}}}} $. \\
Denote $ M = \mathop {\max }\limits_{(t,y) \in {{\left[ {v - 2,v + b} \right]}_{{\mathbb{N}_{v - 2}}}} \times \left[ {0,{R_0}} \right]} f(t,y) $ then \\
$ 0 \le f(t,y) \le \frac{{\left( {\tau y + M} \right)}}{\lambda } $,\,\,\,\, $ 0 \le y <  + \infty $. \\
Let $ {R_2} > \max \left\{ {\frac{M}{{(\eta  - \tau )}},2m} \right\} $. For $ y \in \partial {\Omega _{{R_2}}} $, we have
\begin{align*}
\left\| {Fy} \right\| & = \mathop {\max }\limits_{t \in {{\left[ {v - 2,v + b} \right]}_{{\mathbb{N}_{v - 2}}}}} \lambda \sum\limits_{s = 0}^b {G\left( {t,s} \right)h\left( {s + v - 1} \right)f\left( {s + v - 1,y\left( {s + v - 1} \right)} \right)} \\
& \le \lambda \sum\limits_{s = 0}^b {G\left( {s + v - 1,s} \right)h\left( {s + v - 1} \right)f\left( {s + v - 1,y\left( {s + v - 1} \right)} \right)} \\
& \le \lambda \frac{{\left( {\tau \left\| y \right\| + M} \right)}}{\lambda }\sum\limits_{s = 0}^b {G(s + v - 1,s)h(s + v - 1)} \\
& = \lambda \frac{{\left( {\tau {R_2} + M} \right)}}{\lambda }\cdot \frac{1}{\eta }\\
& < {R_2} = \left\| y \right\|,
\tag{3.9}
\end{align*}
hence, we have  $ \left\| {Fy} \right\| < \left\| y \right\| $, for $ y \in \partial {\Omega _{{R_2}}}. $

Finally, for any $ y \in \partial {\Omega _{m}} $, since $ \gamma m \le y(t) \le m $ for $ t \in \left[ {\frac{{b + v}}{4},\frac{{3\left( {b + v} \right)}}{4}} \right] $, \\ we have
\begin{align*}
\left( {Fy} \right)\left( {\left[ {\frac{{b - v}}{2}} \right] + v} \right) & = \sum\limits_{s = 0}^b G \left( {\left[ {\frac{{b - v}}{2}} \right] + v,s} \right)\lambda h(s + v - 1) \\
& \times f(s + v - 1,y(s + v - 1))\\
& > \lambda \sigma \gamma m\sum\limits_{s = \left[ {\frac{{b + v}}{4} - v + 1} \right]}^{\left[ {\frac{{3(b + v)}}{4} - v + 1} \right]} {G\left( {\left[ {\frac{{b - v}}{2}} \right] + v,s} \right)h\left( {s + v - 1} \right)} \\
& = m = \left\| y \right\|.
\tag{3.10}
\end{align*}
Hence,  $ \left\| {Fy} \right\| > \left\| y \right\| $, for $ y \in {\mathcal{K}_0} \cap \partial {\Omega _m} $. \\
Therefore, by the theorem $ 2.2 $, the proof is complete.
\end{proof}

\begin{example}\label{}
Consider the following fractional boundary value problem,
\[{\Delta ^{\frac{5}{4}}}y\left( t \right) =  - \lambda{\frac{{1}}{100}} {e^{\left( {t + \frac{1}{4}} \right)}}\left( {t + \frac{1}{4}} \right)\left\{ {{y^{\frac{1}{2}}}\left( {t + \frac{1}{4}} \right) + {y^2}\left( {t + \frac{1}{4}} \right)} \right\}\tag{3.11}\]
\[y\left( { - \frac{3}{4}} \right) =0,  y\left( {\frac{{25}}{4}} \right) = 0\tag{3.12}\]
where $ v = \frac{5}{4} $, $ b = 5 $, $ f(t,y) = {\frac{{1}}{100}}t\left( {{y^{\frac{1}{2}}} + {y^2}} \right) $, $ h\left( t \right) = {e^t} $. With a simple computation we can verify that $ \eta > {0.0021} $.
$ f:\left[ {0,\infty } \right) \times \left[ {0,\infty } \right) \to \left[ {0,\infty } \right) $ and $ h:\left[ {0,\infty } \right) \to \left[ {0,\infty } \right) $ and $ f(t,y) $ satisfies the conditions $ H1 $ and $ H3 $, will have at least one positive solution.
\end{example}

\bibliographystyle{plain}

\label{lastpage}
\end{document}